\newcommand{\ddb}{\partial\bar\partial}
\newcommand{\tr}{\operatorname{tr}}
\newcommand{\Ric}{\operatorname{Ric}}
\newcommand{\minn}{_\text{min}}
\newcommand{\maxx}{_\text{max}}
\author{Xi Sisi Shen*}
\thanks{*Supported in part by NSF grant DMS-2601275.}
\address{Department of Mathematics\\
  CUNY City Tech\\
  New York, NY 11201, USA}
\email[X. S. Shen]{xi.shen15@citytech.cuny.edu}
\author{Kevin Smith}
\address{Department of Mathematics\\
  Imperial College London\\
  180 Queen's Gate, London SW7 2AZ, UK}
\email[K. Smith]{kevin.smith1@imperial.ac.uk}
\newtheorem{proposition}{Proposition}
\newtheorem{theorem}{Theorem}
\newtheorem{lemma}{Lemma}
\newtheorem{remark}{Remark}
\numberwithin{equation}{section}
\title{Coupled continuity equations for constant scalar curvature K\"ahler metrics}
\begin{document}

\baselineskip=15pt

\bibliographystyle{amsplain}

\begin{abstract}
    Inspired by a parabolic system of Li-Yuan-Zhang and the continuity equation of La Nave-Tian, we study a system of elliptic equations for a K\"ahler metric $\omega$ and a closed $(1, 1)$-form $\alpha$. Assuming a uniform estimate for $\omega$, we prove higher order estimates and smooth convergence to a cscK metric coupled to a harmonic $(1, 1)$-form. A simplification of the system is used to recover existence results for K\"ahler-Einstein metrics when $c_1(X) < 0$. On Riemann surfaces with genus at least $2$, we show smooth convergence to the unique K\"ahler-Einstein metric from a large class of initial data.
\end{abstract}
\maketitle

\section{Introduction}\label{section:introduction}

\noindent Let $X$ be a compact K\"ahler manifold of complex dimension $n$. The search for canonical metrics within a given K\"ahler class was initiated by Calabi in his seminal paper \cite{calabi1}. He introduced the functional
\begin{equation*}
    \operatorname{Cal}(\omega) = \int_X R^2 \, \omega^n,
\end{equation*}
where $R$ is the scalar curvature of $\omega$, whose critical points are called extremal metrics and whose minimizers are constant scalar curvature K\"ahler (cscK) metrics. Earlier, the special case of K\"ahler-Einstein metrics was famously solved by Aubin \cite{aubin78} when $c_1(X) < 0$ and Yau \cite{yau78} when $c_1(X) < 0$ or $c_1(X) = 0$. The existence problem for K\"ahler-Einstein metrics reduces to a second order fully nonlinear elliptic PDE called the complex Monge-Amp\`ere equation. In contrast, the general cscK problem is equivalent to a fourth order PDE, and thus the situation is much more challenging. Recently, Chen-Cheng \cite{chen2021constant, chen2021constant2, chen2018constant} made a large breakthrough for the cscK problem where they recast the fourth order equation as a system of second order equations
\begin{align*}
    F &= \log\frac{\omega_\varphi^n}{\omega^n}\\
    \Delta_\varphi F &= -\underline{R}+\tr_\varphi\Ric(\omega).
\end{align*}
The Calabi flow was introduced by Calabi for K\"ahler metrics in \cite{calabi1,calabi2} as a parabolic approach to studying the existence of cscK metrics. The flow is given by
\begin{equation*}
    \partial_t \omega(t) = i \partial \bar \partial R(\omega(t))
\end{equation*}
and has cscK metrics as its stationary points. The Calabi flow on Riemann surfaces was studied by Crusciel in \cite {chrusciel} where long-time existence and convergence to a cscK metric were shown. Chen provided another proof on Riemann surfaces using a concentration/compactness argument \cite{chen01}. Chen-He \cite{chen2008calabi} showed that this flow exists as long as the Ricci curvature remains bounded. Sz\'ekelyhidi \cite{szekelyhidi12} show that the flow converges to a constant scalar curvature K\"ahler metric if the curvature tensor remains uniformly bounded and the Mabuchi energy is proper. In \cite{chen-sun10}, Chen-Sun show stability of the flow, that is, if the initial metric is very close to the cscK metric, then the flow exists and converges uniformly to the cscK metric. More results on the Calabi flow can be found in \cite{bv20, bdl, chang, ChenHe2012, fine10, he15, huangarxiv, huang15, hz12, lwz, streets14, streets16, struwe02, tw07}. 

In \cite{li2020new}, Li-Yuan-Zhang  introduced the parabolic system
\begin{align}
    \partial_t \omega & = -\Ric(\omega) + \lambda \omega + \alpha\label{parabolicsystem1}\\
    \partial_t \alpha & = \Delta \alpha.\label{parabolicsystem2}
\end{align}
It is a modified K\"ahler-Ricci flow for a K\"ahler metric $\omega$ coupled with the heat flow for a closed $(1, 1)$-form $\alpha$, and its stationary points are pairs consisting of a cscK metric with a harmonic $(1, 1)$-forms. Thus, like the Chen-Cheng system, this system serves the purpose of the Calabi flow without being fourth order. The benefit of including the term involving $\alpha$ in the first equation is that it allows the flow to converge to cscK metrics that are not K\"ahler-Einstein. See \cite{chen2013pseudo} and references therein for earlier ideas of this type.

Li-Yuan-Zhang \cite{li2020new} proved Shi-type estimates which show that if the flow exists only for a finite time them the Riemann curvature tensor must blow up. They also showed that the flow exists for all time whenever $\Ric(\omega)$ and $\alpha$ are uniformly bounded. In a subsequent paper, Li-Yuan \cite{li2021local} proved a local estimate for the Riemann curvature tensor which is then used to improve the results of \cite{li2020new}. Fei-Guo-Phong \cite{fei2019convergence} studied a generalization of the system where $\alpha$ evolves by $\partial_t \alpha = \kappa \Delta \alpha$, which they refer to as the $\kappa$-LYZ flow. They showed that for $\kappa \neq 1$, estimates for derivatives of all orders follow from uniform estimates on $\omega$ and $\alpha$. Coupled Ricci flows in the real setting have been studied by List \cite{list08} and Guo-Huang-Phong \cite{guo2015pseudo}.

In this paper, we study a system of elliptic equations with a time parameter which are analogous to the $\kappa$-LYZ flow, namely
\begin{align}
    \frac{\omega - \hat \omega}{t} & = -\Ric(\omega) + \lambda \omega + \alpha\label{system1} \\ 
    \frac{\alpha - \hat \alpha}{t} & = \kappa \Delta \alpha\label{system2}
\end{align}
starting from a K\"ahler metric $\hat \omega$ and a closed $(1, 1)$-form $\hat \alpha$. Here $\kappa > 0$ is arbitrary and $\lambda$ is a constant determined by $c_1(X)$ and the cohomology classes of $\hat \omega$, $\hat \alpha$.

This system is related to the parabolic system \eqref{parabolicsystem1}, \eqref{parabolicsystem2} by replacing the time derivatives $\partial_t \omega$, $\partial_t \alpha$ with the secants $(\omega - \hat \omega) / t$, $(\alpha - \hat \alpha) / t$, an idea introduced by La Nave-Tian \cite{nt15} in the context of the K\"ahler-Ricci flow. In their case, a major advantage is that solutions to the continuity equation have an immediate Ricci curvature lower bound. In the work of Zhang-Zhang \cite{zz19}, solutions to the continuity equation are studied on minimal elliptic K\"ahler surfaces and the Ricci curvature lower bound yields a more straightforward proof for the diameter bound. In \cite{zz20}, they study solutions in the case of Fano fibrations. More literature on the continuity equation for K\"ahler metrics can be found in \cite{wondo2023curvature, wondo2023calabi} and for Hermitian metrics in \cite{sw20, shen2024continuity, liang2024continuity}.

One advantage of our system over the $\kappa$-LYZ flow is that all of our results hold with no constraints on $\kappa$. Additionally, the higher order estimates require only a uniform estimate for $\omega$ and not $\alpha$. Due to the elliptic nature of the equations, $C^0$ estimates for the potentials follow easily from the maximum principle. In the case of $c_1(X) < 0$, this is sufficient to prove the required uniform estimate for $\omega$ needed for the higher order estimates. When $c_1(X) = 0$, the potential estimate is only of the form $\| \varphi \|_{C^0} \leq Ct$ which despite not being strong enough to prove the uniform estimate for $\omega$ is nonetheless interesting since the maximum principle for the ordinary complex Monge-Amp\`ere equation gives no information in this case.

Our first result shows that solutions to the system converge smoothly whenever there is a uniform bound on the family of metrics and uniqueness of a cscK metric in the K\"ahler class of the initial metric:

\begin{theorem}[Smooth convergence]\label{convergence}
    Let $\hat \omega$, $\hat \alpha$ be a K\"ahler metric and a closed $(1, 1)$-form satisfying $-c_1(X) + \lambda [\hat \omega] + [\hat \alpha] = 0$ with $\lambda \leq 0$, and assume that $[\hat \omega]$ contains at most one cscK metric. If the system \eqref{system1}, \eqref{system2} satisfies an a priori estimate of the form $C^{-1} \hat \omega \leq \omega(t) \leq C \hat \omega$, then it has a unique solution on $[0, \infty)$ which converges smoothly to the unique cscK metric $\omega_\infty$ in $[\hat \omega]$ and the unique $\omega_\infty$-harmonic $(1, 1)$-form $\alpha_\infty$ in $[\hat \alpha]$.
\end{theorem}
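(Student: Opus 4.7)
The plan is to (a) rewrite the system as a coupled scalar PDE in K\"ahler potentials, (b) produce smooth solutions for all $t > 0$ via a continuity method with $t$ itself as the parameter, and (c) extract the $t \to \infty$ limit using the hypothesized uniform bound on $\omega$ together with the higher order estimates established elsewhere in the paper. Taking cohomology in \eqref{system1}, \eqref{system2} and using that $\Delta\alpha$ is exact shows $[\alpha(t)] = [\hat\alpha]$, after which the cohomological assumption forces $[\omega(t)] = [\hat\omega]$, which is consistent since $\lambda \leq 0$ makes $1 - t\lambda$ nonzero. By the $\ddb$-lemma one writes $\omega = \hat\omega + i\ddb\varphi$ and $\alpha = \hat\alpha + i\ddb\psi$; letting $h$ be a Ricci potential $-\Ric(\hat\omega) + \lambda\hat\omega + \hat\alpha = i\ddb h$, the first equation becomes a Monge--Amp\`ere-type relation
\begin{equation*}
(1 - t\lambda)\varphi - t\psi - t\log\frac{(\hat\omega + i\ddb\varphi)^n}{\hat\omega^n} = t h + c_t,
\end{equation*}
while applying $\tr_\omega$ to the second equation and using the K\"ahler identity $[\Lambda, \Delta] = 0$ reduces it to a fourth order elliptic equation for $\psi$ coupled to $\varphi$.

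For existence I would run a continuity method in $t$. The trivial solution at $t = 0$ provides the base point; openness follows from the implicit function theorem, using that the linearized coupled system has diagonal principal parts $(1 - t\lambda) - t\Delta_\omega$ and a fourth order operator with principal part proportional to $\Delta_\omega^2$, both invertible after imposing integral normalizations when $\lambda \leq 0$; closedness uses the assumed $C^{-1}\hat\omega \leq \omega \leq C\hat\omega$ together with the higher order estimates to obtain uniform $C^k$ bounds on each compact subinterval of $(0, \infty)$. Uniqueness at fixed $t$ then follows by applying the maximum principle to the difference of two solutions, once more exploiting the sign of $\lambda$.

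For the limit, the uniform higher order bounds yield precompactness and subsequential smooth convergence $(\omega(t_j), \alpha(t_j)) \to (\omega_\infty, \alpha_\infty)$ along any $t_j \to \infty$, with $\omega_\infty$ still K\"ahler by passing the uniform positivity bound to the limit. Writing \eqref{system1}, \eqref{system2} as
\begin{equation*}
-\Ric(\omega) + \lambda\omega + \alpha = \frac{\omega - \hat\omega}{t}, \qquad \kappa\Delta\alpha = \frac{\alpha - \hat\alpha}{t},
\end{equation*}
and sending $t_j \to \infty$ yields $-\Ric(\omega_\infty) + \lambda\omega_\infty + \alpha_\infty = 0$ and $\Delta_{\omega_\infty}\alpha_\infty = 0$, so $\alpha_\infty$ is $\omega_\infty$-harmonic. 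Taking $\tr_{\omega_\infty}$ of the first identity and invoking $[\Lambda, \Delta_{\omega_\infty}] = 0$ shows $\tr_{\omega_\infty}\alpha_\infty$ is $\Delta_{\omega_\infty}$-harmonic and hence constant, so $R(\omega_\infty)$ is constant. The hypothesis that $[\hat\omega]$ contains at most one cscK metric identifies $\omega_\infty$ uniquely, and then Hodge theory uniquely identifies $\alpha_\infty$ as the $\omega_\infty$-harmonic representative of $[\hat\alpha]$; hence subsequential convergence promotes to full smooth convergence.

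The main obstacle I anticipate is establishing invertibility of the linearized operator for the coupled system along the continuity path. The $\varphi$-equation feels $\psi$ through an order-zero coupling, while the $\psi$-equation involves $\varphi$ through the geometry of $\Delta_\omega$, and ensuring that this coupling does not produce kernel elements requires the sign assumption $\lambda \leq 0$ in a manner analogous to its role in the classical Aubin--Yau existence theorem for K\"ahler--Einstein metrics with $c_1(X) \leq 0$.
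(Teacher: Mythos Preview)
Your overall architecture---continuity in $t$, higher order estimates from the uniform metric bound, subsequential compactness, identification of the limit via uniqueness of cscK and Hodge representatives---matches the paper's proof. The limit argument in particular is essentially identical.

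The one substantive discrepancy is your treatment of the $\psi$-equation. You describe it as ``a fourth order elliptic equation for $\psi$'' with linearized principal part $\Delta_\omega^2$. In fact the paper's \cref{ddbcontraction} (the identity $\Delta\beta = i\ddb\tr_\omega\beta$ for closed $(1,1)$-forms) collapses \eqref{system2} directly to the \emph{second order} scalar equation
\begin{equation*}
\frac{\psi}{t} = \kappa(\Delta_\omega \psi + \tr_\omega\hat\alpha),
\end{equation*}
up to a constant. Your own computation via $[\Lambda,\Delta]=0$ actually shows the same thing once you notice that the traced equation factors through $\Delta_\omega$; you stopped one step short. This matters in two places. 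First, the openness step becomes routine: the paper handles it at the level of forms as a second-order elliptic system (\cref{shorttimeexistence}), avoiding any bilaplacian analysis. Second, the higher order estimates you invoke (\cref{higherorderestimates}) are proved precisely by applying De Giorgi--Nash--Moser and Schauder theory to this second-order equation for $\psi$ and then bootstrapping; a genuinely fourth-order equation would not admit that argument. So while your outline is not wrong, the ``main obstacle'' you flag at the end largely evaporates once the correct order of the $\psi$-equation is recognized.
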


\begin{remark}\label{convergencewhenc1negative}
    If $c_1(X) < 0$ and $\hat \omega \in -r c_1(X)$ for some $r > 0$ or $c_1(X) = 0$ then the hypothesis that $[\hat \omega]$ contains at most one cscK metric is satisfied, since in these cases cscK metrics are K\"ahler-Einstein and are unique in their classes.
\end{remark}

\noindent Combining the above remark with uniform estimates on the family of metrics, we use a special case of the system to show smooth convergence to the unique K\"ahler-Einstein metric in $\frac{1}{\lambda} c_1(X)$, giving an alternative proof of existence:

\begin{theorem}[Smooth convergence to KE metric]\label{KEconvergence}
    Assume $c_1(X) < 0$. Let $\hat \omega$ be a K\"ahler metric satisfying $-c_1(X) + \lambda [\hat \omega] = 0$ with $\lambda < 0$. The equation
    \begin{equation*}
        \frac{\omega - \hat \omega}{t} = -\Ric(\omega) + \lambda \omega
    \end{equation*}
    has a unique solution on $[0, \infty)$ which converges smoothly to the unique K\"ahler-Einstein metric $\omega_\infty$ in $[\hat \omega]$.
\end{theorem}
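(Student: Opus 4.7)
The strategy is to realize Theorem~\ref{KEconvergence} as the special case of Theorem~\ref{convergence} obtained by taking $\hat\alpha = 0$ and seeking a solution with $\alpha(t) \equiv 0$. When $\hat\alpha = 0$, the pair $(\omega(t), 0)$ satisfies the coupled system \eqref{system1}, \eqref{system2} if and only if $\omega(t)$ solves the equation of Theorem~\ref{KEconvergence}. The cohomological hypothesis $-c_1(X) + \lambda[\hat\omega] + [\hat\alpha] = 0$ of Theorem~\ref{convergence} reduces to our assumption, and by Remark~\ref{convergencewhenc1negative} the K\"ahler class $[\hat\omega] = \lambda^{-1}c_1(X)$ contains a unique cscK metric, namely the K\"ahler-Einstein metric. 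It therefore suffices to produce a family $\omega(t)$ satisfying the equation together with the uniform metric bound $C^{-1}\hat\omega \leq \omega(t) \leq C\hat\omega$, and then invoke Theorem~\ref{convergence}.

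To this end, I would write $\omega = \hat\omega + i\ddb\varphi$ and, using the cohomological condition to choose a smooth $F$ with $\Ric(\hat\omega) - \lambda\hat\omega = i\ddb F$, reduce the equation, via a short computation and integration of $i\ddb$, to the scalar complex Monge-Amp\`ere equation
\begin{equation*}
    \log\frac{(\hat\omega + i\ddb\varphi)^n}{\hat\omega^n} = \frac{1-t\lambda}{t}\varphi + F + b(t),
\end{equation*}
where $b(t)$ is a constant of integration determined by the normalization of $\varphi$. Since $\lambda < 0$, the coefficient $(1-t\lambda)/t$ is strictly positive for every $t > 0$ and converges to $-\lambda > 0$ as $t \to \infty$, placing us squarely in the negative Ricci setting of Aubin-Yau. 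Classical arguments then yield a unique smooth solution $\varphi(t)$ for each $t > 0$, with the initial datum $\omega(0) = \hat\omega$ handled trivially.

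Finally, I would establish the uniform metric bound. At a maximum of $\varphi(t)$, $i\ddb\varphi \leq 0$ forces the right-hand side of the Monge-Amp\`ere equation to be nonpositive, and symmetrically at a minimum; after a suitable normalization (for instance $\sup_X \varphi(t) = 0$), both $\|\varphi\|_{C^0}$ and $|b(t)|$ are controlled by $\|F\|_{C^0}$ divided by the coefficient $(1-t\lambda)/t$. Since $(1-t\lambda)/t$ is bounded above and below by positive constants on any interval $[t_0, \infty)$ with $t_0 > 0$, one obtains a uniform $C^0$ bound on $\varphi(t)$ away from $t = 0$, while control near $t = 0$ follows by continuity from $\omega(0) = \hat\omega$. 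The standard Aubin-Yau $C^2$ estimate, which depends only on the $C^0$ bound and the positivity of the coefficient of $\varphi$, then upgrades this to $C^{-1}\hat\omega \leq \omega(t) \leq C\hat\omega$ uniformly on $[0, \infty)$, and Theorem~\ref{convergence} concludes the proof. The principal obstacle is this uniform $t$-control; the strict negativity of $\lambda$ is essential, for it keeps $(1-t\lambda)/t$ bounded away from zero as $t \to \infty$, a degeneration that would break both the $C^0$ and the $C^2$ estimates in the $\lambda = 0$ case.
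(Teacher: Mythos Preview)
Your proposal is correct and follows essentially the same route as the paper: reduce to $\hat\alpha = 0$ (hence $\alpha \equiv 0$), rewrite as a scalar Monge-Amp\`ere equation for $\varphi$, obtain a $C^0$ bound via the maximum principle using $\lambda < 0$, upgrade to the uniform metric bound by the Aubin-Yau $C^2$ estimate, and apply Theorem~\ref{convergence} together with Remark~\ref{convergencewhenc1negative}.

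One cosmetic point: you introduce an integration constant $b(t)$ and then split the time axis into $[0,t_0]$ and $[t_0,\infty)$ to control $\|\varphi\|_{C^0}$, but neither is necessary. The paper simply normalizes $\varphi$ so that $b(t)\equiv 0$ (this is \eqref{potentialsystem1} with $f=0$), and then the maximum principle gives $(1/t - \lambda)\,|\varphi| \leq \|\hat F\|_{C^0}$ directly. Since $1/t - \lambda \geq -\lambda > 0$ for \emph{all} $t>0$, this already yields a uniform $C^0$ bound on $(0,\infty)$ with no need for a separate argument near $t=0$. Your version works, but the observation that the coefficient is globally bounded below by $-\lambda$ removes the detour.
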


\noindent In the case of Riemann surfaces of genus at least $2$ which are exactly those with negative first Chern class, we show that solutions to the system starting from more general initial data will smoothly converge to the unique K\"ahler-Einstein metric:

\begin{theorem}[Smooth convergence on Riemann surfaces]\label{riemannsurfacesconvergencefromclass}
    Let $X$ be a Riemann surface with $c_1(X) < 0$, $A \in H^{1,1}(X, \mathbb{R})$ and $\hat \omega$ a K\"ahler metric satisfying \ref{A1}. Then there is an open set of $\hat \alpha \in A$ in the topology of the uniform norm for which the system \eqref{system1}, \eqref{system2} has a unique solution on $[0, \infty)$ which converges smoothly to the unique cscK metric $\omega_\infty$ in $[\hat \omega]$ and the unique $\omega_\infty$-harmonic $(1, 1)$-form $\alpha_\infty$ in $A$.
\end{theorem}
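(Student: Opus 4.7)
The plan is to verify the hypotheses of \Cref{convergence}: (i) the cohomology condition with $\lambda\le 0$, (ii) uniqueness of a cscK metric in $[\hat\omega]$, and (iii) the a priori estimate $C^{-1}\hat\omega\le\omega(t)\le C\hat\omega$. Conditions (i) and (ii) are essentially automatic on a Riemann surface with $c_1(X)<0$: Hodge theory gives $h^{1,1}(X,\mathbb R)=1$, so every real $(1,1)$-class is a multiple of $-c_1(X)$. Writing $[\hat\omega]=r(-c_1(X))$ (with $r>0$ by \ref{A1}) and $A=s(-c_1(X))$, the constraint $-c_1(X)+\lambda[\hat\omega]+A=0$ forces $\lambda=-(1+s)/r$, and the sign hypothesis \ref{A1} ensures $\lambda\le 0$. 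The unique cscK metric $\omega_\infty\in[\hat\omega]$ is the K\"ahler--Einstein metric, and the unique $\omega_\infty$-harmonic representative $\alpha_\infty=(s/r)\omega_\infty\in A$ is the limit that \Cref{convergence} will deliver.

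To set up (iii), I take as basepoint in $A$ the $\hat\omega$-harmonic representative $\hat\alpha_*:=(s/r)\hat\omega$. Since $h^{1,1}=1$, every harmonic $(1,1)$-form is a constant multiple of the ambient K\"ahler form, so I decompose $\alpha(t)=\hat\alpha_*+i\partial\bar\partial\tilde\eta(t)$ and $\hat\alpha=\hat\alpha_*+i\partial\bar\partial\tilde\eta_0$; closeness of $\hat\alpha$ to $\hat\alpha_*$ in $C^0$ is precisely $C^0$-smallness of $\tilde\eta_0$. Writing $\omega=\hat\omega+i\partial\bar\partial\varphi$, the K\"ahler identity $\Delta(i\partial\bar\partial f)=i\partial\bar\partial\Delta f$ and the harmonicity $\Delta\hat\omega=0$ reduce equation~\eqref{system2} (after a scalar integration, modulo additive constants) to a scalar elliptic equation for $\tilde\eta$ in which $\Delta_\omega\varphi$ appears as a forcing term, while \eqref{system1} becomes a scalar Monge--Amp\`ere-type equation for $\varphi$ driven by $\tilde\eta$. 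At the basepoint ($\tilde\eta_0\equiv 0$), equation~\eqref{system1} reduces to $(\omega-\hat\omega)/t=-\Ric(\omega)+\lambda'\omega$ with $\lambda'=\lambda+s/r\le 0$, which is exactly the equation of \Cref{KEconvergence}, so its proof delivers the two-sided metric bound in this special case.

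The main work is extending this bound to an open $C^0$-neighborhood of $\hat\alpha_*$ in $A$, and the hard part is that the estimates on $\omega$ and $\alpha$ are mutually coupled: the maximum principle applied to the scalar equation for $\tilde\eta$ yields $\|\tilde\eta\|_{C^0}\le C(\|\tilde\eta_0\|_{C^0}+\|\Delta_\omega\varphi\|_{C^0})$ only after $\omega$ has a two-sided metric bound, while rerunning the maximum-principle arguments of \Cref{KEconvergence} with the extra forcing $i\partial\bar\partial\tilde\eta$ requires a uniform $C^0$-bound on $\alpha$. My strategy is a bootstrap in the small-data regime: treating $\|\hat\alpha-\hat\alpha_*\|_{C^0}$ as a small parameter, show that $i\partial\bar\partial\tilde\eta$ is a bounded perturbation of size $O(\|\hat\alpha-\hat\alpha_*\|_{C^0})$ of the \Cref{KEconvergence} equation, and close the loop by a continuity argument to establish that the admissible set of $\hat\alpha$ is open around $\hat\alpha_*$. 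Once the uniform estimate is in hand, \Cref{convergence} produces the smooth convergence to $(\omega_\infty,\alpha_\infty)$.
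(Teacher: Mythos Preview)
Your reduction at the basepoint is incorrect, and this is the central gap. Taking $\hat\alpha_*=(s/r)\hat\omega$ does \emph{not} make the coupled system collapse to the single equation of \Cref{KEconvergence}. Concretely, if one tries $\alpha(t)=(s/r)\omega(t)$, then $\Delta_{\omega(t)}\alpha(t)=0$ and \eqref{system2} forces $\alpha(t)=\hat\alpha_*$, i.e.\ $\omega(t)=\hat\omega$ for all $t$ (when $s\neq 0$); conversely, if one tries $\alpha(t)\equiv\hat\alpha_*$, then \eqref{system2} requires $\Delta_{\omega(t)}\hat\omega=i\partial\bar\partial\,\tr_{\omega(t)}\hat\omega=i\partial\bar\partial e^{-\phi}=0$, again forcing $\phi$ constant. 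So for $t>0$ the potential $\tilde\eta(t)$ is genuinely nonzero even when $\tilde\eta_0\equiv 0$, and the term $i\partial\bar\partial\tilde\eta$ does not drop out of \eqref{system1}. Consequently there is no ``unperturbed'' situation in which the $C^2$ estimate of \Cref{c2estimate} applies, and your perturbation scheme has no anchor. (There is also a smaller slip: $C^0$-closeness of $\hat\alpha$ to $\hat\alpha_*$ means smallness of $i\partial\bar\partial\tilde\eta_0$, not of $\tilde\eta_0$ itself.)

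The paper proceeds quite differently and avoids any perturbation argument. In dimension one it writes $\omega=e^{\phi}\hat\omega$, $\alpha=\tau\omega$, and applies the maximum principle directly to the scalar pair \eqref{conformalsystem1b}, \eqref{conformalsystem2a}. Under the explicit numerical condition $\hat\tau\maxx^+-\hat\tau\minn^-<-\hat R\maxx$ on the initial data, \Cref{conformalestimates} yields uniform two-sided bounds on $\phi$ and $\tau$, hence $C^{-1}\hat\omega\le\omega(t)\le C\hat\omega$. The proof of \Cref{riemannsurfacesconvergencefromclass} then simply observes that assumption \ref{A1} guarantees one can choose (an open set of) representatives $\hat\alpha\in A$ satisfying that numerical condition, by compressing $\hat\tau$ toward its average $\underline\tau$; the conclusion then follows from \Cref{riemannsurfacesconvergence} and \Cref{convergence}. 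No bootstrap and no reference to \Cref{KEconvergence} are needed.
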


\noindent The technical assumption \ref{A1} is needed for the $C^0$ estimates for $\omega$ and $\alpha$ \eqref{conformalestimates}. This theorem is similar to a result of Fei-Guo-Phong \cite{fei2019convergence}, where the assumption \eqref{A1} is not needed, but $\kappa \neq 1$ is assumed.

This paper is organized as follows. In \cref{section:preliminaries}, we introduce the notation and basic facts that we will use throughout the paper. In \cref{section:thecoupledcontinuityequations}, we introduce the system of elliptic equations, prove its short-time existence, identify its stationary points and prove a scalar curvature lower bound. In \cref{section:reductiontoascalarsystem}, we show that the system reduces to a scalar system involving the complex Monge-Amp\`ere equation, and use this to obtain estimates on potentials. In \cref{section:higherorderestimatesandconvergence}, contingent on a uniform estimate for $\omega$, we prove higher order estimates and smooth convergence to a cscK metric and harmonic $(1, 1)$-form whenever there is a unique cscK metric in the K\"ahler class of the initial metric, proving \cref{convergence}. In \cref{section:kahlereinsteinmetrics}, we establish the required uniform estimate for $\omega$ in a special case by proving a $C^2$ estimate for potentials, which allows us to demonstrate an application of the preceeding convergence theorem and giving a proof of \cref{KEconvergence}. In \cref{section:riemannsurfaces}, we analyze the case of Riemann surfaces and establish the uniform estimate for $\omega$ by estimating conformal factors and demonstrate another application of the convergence theorem, giving a proof of \cref{riemannsurfacesconvergencefromclass}.\\

\noindent\textbf{Acknowledgements:} We would like to thank Teng Fei and Ben Weinkove for some helpful discussions.

\section{Preliminaries}\label{section:preliminaries}

\noindent In this section we establish the notation and basic results that we will use later. Let $X$ be a compact complex manifold of complex dimension $n$.

All tensors considered in this paper are smooth. By $z^1, \dotsc, z^n$ we always mean a local holomorphic coordinate system. Expressions involving indices always refer to such a coordinate system, and repeated indices are always summed over.

Consider a K\"ahler metric $\omega = g_{\bar k j} \, i dz^j \wedge d \bar z^k$. The metric induces an isomorphism $TX \simeq T^* X$, allowing us to raise and lower indices. The volume form of $\omega$ is given locally by
\begin{equation*}
    \frac{\omega^n}{n!} = \det \omega \, i dz^1 \wedge d \bar z^1 \wedge \cdots \wedge i dz^n \wedge d \bar z^n.
\end{equation*}
Since $\omega$ is K\"ahler, its Levi-Civita connection and Chern connection coincide; thus there is a unique torsion-free metric connection $\nabla = \partial + \Gamma$ that is compatible with the complex structure. Its curvature is $R_{\bar k  j}{}^p{}_q = -\partial_{\bar k} \Gamma^p_{j q}$, and this satisfies the Bianchi identity
\begin{equation*}
    R_{\bar k j \bar q p} = R_{\bar k p \bar q j} = R_{\bar q j \bar k p} = R_{\bar q p \bar k j}.
\end{equation*}
Write $\Ric(\omega) = R_{\bar k j} \, i dz^j \wedge d \bar z^k$ where $R_{\bar k j} = R_{\bar k j}{}^p{}_q$. The above identity implies
\begin{equation}\label{equalityofriccis}
    R_{\bar k j} = R^p{}_{p \bar k j}.
\end{equation}
Locally
\begin{equation*}
    \Ric(\omega) = - i \ddb \log \omega^n
\end{equation*}
and consequently $\Ric(\omega)$ is closed. If $\chi$ is another K\"ahler metric then
\begin{equation*}
    \Ric(\omega) - \Ric(\chi) = -i \ddb \log \frac{\omega^n}{\chi^n}
\end{equation*}
so the Ricci form of all K\"ahler metrics lie in the same class $c_1(X)$.

For any closed $(1, 1)$-form $\beta$
\begin{equation}\label{averageoftraceinclass}
    \int_X \tr_\omega \beta \, \frac{\omega^n}{n!} = \int_X [\beta] \wedge \frac{[\omega]^{n - 1}}{(n - 1)!}
\end{equation}
where the notation indicates that this depends only on the cohomology classes $[\omega]$, $[\beta]$.

With respect to $\omega$ the exterior derivative $d = \partial + \bar \partial$ has an adjoint $d^\dagger = \partial^\dagger + \bar \partial^\dagger$. If $K$ is a multi-index, then for $\beta = \beta_{\bar K p} \, dz^p \wedge d \bar z^K$
\begin{equation}\label{adjoint1}
    \partial^\dagger \beta_{\bar K} = -\nabla^p \beta_{\bar K p}\\
\end{equation}
and for $\beta = \beta_{\bar K j p} \, dz^p \wedge dz^j \wedge d \bar z^K$
\begin{equation}\label{adjoint2}
    \partial^\dagger \beta_{\bar K j} = -\nabla^p \beta_{\bar K j p} - g^{r \bar s} \partial^p g_{\bar s j} \beta_{\bar K r p}.
\end{equation}
Define the Laplacian $\Delta = - \frac{1}{2} (d^\dagger d + d d^\dagger)$. It follows that $d \Delta = \Delta d$, and that the Laplacians $-(\partial^\dagger \partial + \partial \partial^\dagger)$ and $-(\bar \partial^\dagger \bar \partial + \bar \partial \bar \partial^\dagger)$ are both equal to $\Delta$. In particular, $\Delta h = \partial^p \partial_p h$ on functions.

A computation shows
\begin{equation*}
    \int_X \nabla_j u^j \, \omega^n = 0
\end{equation*}
for any vector field $u$, and from this it follows that
\begin{equation*}
    \int_X \Delta h \, \omega^n = 0
\end{equation*}
for any function $h$.

If $\lambda > 0$ then the equation $\Delta \beta = \lambda \beta$ implies
\begin{equation*}
    \| d \beta \|^2 + \| d^\dagger \beta \|^2 + \lambda \| \beta \|^2 = 0
\end{equation*}
and hence $\beta = 0$; thus $\Delta$ has no positive eigenvalues.

We will need the following, which we recall from \cite{li2020new}:

\begin{lemma}[Li-Yuan-Zhang \cite{li2020new} Proposition 2.3]\label{ddbcontraction}
    If $\omega$ is a K\"ahler metric and $\beta$ is a closed $(1, 1)$-form then $\Delta \beta = i \ddb \tr_\omega \beta$.
\end{lemma}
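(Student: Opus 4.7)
The plan is to compute $\Delta\beta$ directly via the decomposition $\Delta = -(\partial\partial^\dagger + \partial^\dagger\partial)$ together with formula \eqref{adjoint1} for $\partial^\dagger$ on $(1,1)$-forms, exploiting the closedness of $\beta$ in an essential way. The key simplification is that, after an index swap coming from $\bar\partial\beta = 0$, the adjoint $\partial^\dagger\beta$ collapses to an ordinary $\bar\partial$-derivative of the trace.

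To begin, I would observe that since $\beta$ is closed and of pure type $(1,1)$, splitting $d\beta = 0$ into its $(2,1)$ and $(1,2)$ components gives $\partial\beta = 0$ and $\bar\partial\beta = 0$ separately. Consequently $\partial^\dagger\partial\beta = 0$, so $\Delta\beta = -\partial\partial^\dagger\beta$ and I am reduced to computing $\partial^\dagger\beta$.

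Writing $\beta = \beta_{\bar k j}\, i\, dz^j \wedge d\bar z^k$ in the paper's convention, formula \eqref{adjoint1} gives $(\partial^\dagger\beta)_{\bar k} = -i\, g^{p\bar q}\nabla_{\bar q}\beta_{\bar k p}$, the factor of $i$ being pulled out of the form. The closedness $\bar\partial\beta = 0$ expressed in components is $\partial_{\bar q}\beta_{\bar k j} = \partial_{\bar k}\beta_{\bar q j}$; on a K\"ahler manifold the mixed Christoffel symbols vanish, and the remaining symbols $\Gamma^{\bar s}_{\bar q\bar k}$ are symmetric in $\bar q,\bar k$, so the same identity holds for covariant derivatives: $\nabla_{\bar q}\beta_{\bar k p} = \nabla_{\bar k}\beta_{\bar q p}$. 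Substituting this and using $\nabla g = 0$ together with the fact that $\tr_\omega\beta$ is a scalar,
\[
g^{p\bar q}\nabla_{\bar q}\beta_{\bar k p} \;=\; g^{p\bar q}\nabla_{\bar k}\beta_{\bar q p} \;=\; \nabla_{\bar k}(\tr_\omega\beta) \;=\; \partial_{\bar k}\tr_\omega\beta.
\]
Hence $\partial^\dagger\beta = -i\,\partial_{\bar k}(\tr_\omega\beta)\, d\bar z^k$, and applying $-\partial$ yields
\[
\Delta\beta \;=\; -\partial\partial^\dagger\beta \;=\; i\,\partial_j\partial_{\bar k}(\tr_\omega\beta)\, dz^j\wedge d\bar z^k \;=\; i\ddb\tr_\omega\beta,
\]
as claimed.

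The main obstacle is really just bookkeeping: tracking the factor of $i$ introduced by the convention of writing Hermitian $(1,1)$-forms with an explicit $i\,dz^j\wedge d\bar z^k$, and checking that the partial-derivative form of $\bar\partial\beta = 0$ does translate into the clean covariant identity $\nabla_{\bar q}\beta_{\bar k p} = \nabla_{\bar k}\beta_{\bar q p}$. Once that identity is in hand, the geometric content, that closedness allows $\nabla_{\bar k}$ to be pulled past the trace contraction, yields the lemma immediately.
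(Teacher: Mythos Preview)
Your argument is correct. The paper itself does not prove this lemma; it simply records it with a citation to Li--Yuan--Zhang \cite{li2020new}, so there is no in-paper proof to compare against. Your route via $\Delta = -(\partial^\dagger\partial + \partial\partial^\dagger)$, together with the observation that closedness of $\beta$ gives $\partial\beta = 0$ and $\nabla_{\bar q}\beta_{\bar k p} = \nabla_{\bar k}\beta_{\bar q p}$, so that $\partial^\dagger\beta = -i\,\bar\partial\,\tr_\omega\beta$, is the standard computation and matches the conventions set up in \eqref{adjoint1}.
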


\noindent In what follows we will use notation consistent with this section, that is, all unlabeled geometric objects correspond to $\omega$. The value of the constant $C$ changes from line to line, but always remains independent of time and all unknown functions.

\section{The coupled continuity equations}\label{section:thecoupledcontinuityequations}

\noindent Let $\hat \omega$ be a K\"ahler metric, $\hat \alpha$ a closed $(1, 1)$-form, $\lambda \in \mathbb R$, and $\kappa > 0$. We consider the system
\begin{align*}
    \frac{\omega - \hat \omega}{t} & = -\Ric(\omega) + \lambda \omega + \alpha\label{system1}\\
    \frac{\alpha - \hat \alpha}{t} & = \kappa \Delta \alpha.
\end{align*}
which is elliptic for each $t > 0$. Here the unknowns are a K\"ahler metric $\omega(t) = g_{\bar k j}(t) \, i dz^j \wedge d \bar z^k$ and a closed $(1, 1)$-form $\alpha(t) = \alpha_{\bar k j}(t) \, i dz^j \wedge d \bar z^k$. This family of systems is invariant under the scaling
\begin{equation*}
    \omega \mapsto \omega / M, \quad \alpha \mapsto \alpha, \quad t \mapsto t / M, \quad \lambda \mapsto M \lambda, \quad \kappa \mapsto \kappa.
\end{equation*}
Define $\tau = \tr_\omega \alpha$. It will be useful to consider the system
\begin{align}
    \frac{n - \tr_\omega \hat \omega}{t} & = -R + \lambda n + \tau\label{tracesystem1}\\
    \frac{\tau - \tr_\omega \hat \alpha}{t} & = \kappa \Delta \tau\label{tracesystem2}
\end{align}
obtained by taking the trace of \eqref{system1}, \eqref{system2} and applying \cref{ddbcontraction}.

Since $\hat \omega$, $\hat \alpha$, and $\Ric(\omega)$ are closed, solutions to \eqref{system1}, \eqref{system2} satisfy
\begin{align*}
    \frac{d \omega}{t} & = \lambda d \omega + d \alpha\\
    \frac{d \alpha}{t} & = \kappa \Delta d \alpha.
\end{align*}
As $\Delta$ lacks positive eigenvalues, the second equation shows that $d \alpha = 0$. Now the first equation shows that $d \omega = 0$.

Let $\Omega(t)$, $A(t)$ be the cohomology classes of $\omega(t)$, $\alpha(t)$ and write $\hat \Omega = \Omega(0)$, $\hat A = A(0)$. Taking the classes of \eqref{system1}, \eqref{system2} gives
\begin{align}
    \frac{\Omega - \hat \Omega}{t} & = -c_1(X) + \lambda \Omega + A\label{drsystem1}\\
    \frac{A - \hat A}{t} & = 0\label{drsystem2}
\end{align}
since $\Delta \alpha$ is exact by \cref{ddbcontraction}. Thus $A = \hat A$. Solving \eqref{drsystem1} for $(1 - \lambda t) \Omega$ and using $A = \hat A$ results in
\begin{align}\label{combinedcohomologysolution}
    (1 - \lambda t) & (-c_1(X) + \lambda \Omega + A)\\
    & = (1 - \lambda t) (-c_1(X) + A) + \lambda (\hat \Omega - t c_1(X) + t A)\nonumber\\
    & = -c_1(X) + \lambda \hat \Omega + \hat A.\nonumber
\end{align}
We now assume that $-c_1(X) + \lambda \hat \Omega + \hat A = 0$. By \eqref{combinedcohomologysolution} this implies $-c_1(X) + \lambda \Omega + A = 0$. From \eqref{drsystem1} we now see that $\Omega = \hat \Omega$. Thus under this assumption none of the classes vary in time.

Applying \eqref{averageoftraceinclass} to each of $\omega$, $\Ric(\omega)$, $\alpha$ we see that the quantities
\begin{gather*}
    V = \int_X \Omega^n\\
    \underline R = \frac{n}{V} \int_X c_1(X) \wedge \Omega^{n - 1}\\
    \underline \tau = \frac{n}{V} \int_X A \wedge \Omega^{n - 1}
\end{gather*}
depend only on the indicated cohomology classes, where the latter two are the average values of $R$ and $\tau$. By scaling in the way indicated earlier, we may, if convenient, assume that $V = 1$.

By a slight abuse of notation, we will sometimes write $\underline R$ or $\underline \tau$ to indicate the values indicated above even when we are only given the necessary cohomology classes and have not yet chosen representatives. 

The condition $-c_1(X) + \lambda \Omega + A = 0$ allows us to write
\begin{equation}\label{riccipotential1}
    -\Ric(\omega) + \lambda \omega + \alpha = i \ddb F
\end{equation}
for a function $F(t)$; write $\hat F = F(0)$. In particular
\begin{equation}\label{riccipotential2}
    -R + \lambda n + \tau = \Delta F
\end{equation}
and
\begin{equation}\label{averages}
    -\underline R + \lambda n + \underline \tau = 0.
\end{equation}
This shows that
\begin{equation}\label{lambda}
    \lambda = \frac{1}{n} ( \underline R - \underline \tau ) = \frac{1}{V} \int_X (c_1(X) - A) \wedge \Omega^{n - 1}
\end{equation}
We now prove short-time of existence of solutions:
\begin{lemma}[Short-time existence]\label{shorttimeexistence}
    Let $\hat \omega$ be a K\"ahler metric and $\hat \alpha$ a closed $(1, 1)$-form satisfying $-c_1(X) + \lambda [\hat \omega] + [\hat \alpha] = 0$. Assume that $\omega(t)$, $\alpha(t)$ solve \eqref{system1}, \eqref{system2} on $[0, T)$ for some $T \in (0, \infty)$. Then $\omega(t)$, $\alpha(t)$ can be extended to a solution on $[0, T + \varepsilon)$ for some $\varepsilon > 0$.
\end{lemma}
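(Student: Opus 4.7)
The plan is to reduce the system to a coupled scalar system via potentials and apply the implicit function theorem. Since the excerpt has already established that the cohomology classes $[\omega(t)] = [\hat\omega]$ and $[\alpha(t)] = [\hat\alpha]$ are independent of $t$, the $\ddb$-lemma supplies potentials $\varphi(t), \psi(t)$ with $\omega = \hat\omega + i\ddb\varphi$ and $\alpha = \hat\alpha + i\ddb\psi$, which I normalize to $\int_X \varphi \, \hat\omega^n = \int_X \psi \, \hat\omega^n = 0$. Using \eqref{riccipotential1} to integrate the first equation and \cref{ddbcontraction} (which rewrites $\Delta\alpha$ as $i\ddb\tr_\omega\alpha$) to integrate the second, the original system becomes equivalent to the scalar system
\begin{align*}
    \frac{\varphi}{t} &= \log\frac{(\hat\omega + i\ddb\varphi)^n}{\hat\omega^n} + \lambda\varphi + \psi + \hat F + c_1(t), \\
    \frac{\psi}{t} &= \kappa\tr_{\hat\omega + i\ddb\varphi}(\hat\alpha + i\ddb\psi) + c_2(t),
\end{align*}
where $c_1(t), c_2(t) \in \mathbb{R}$ are uniquely determined by the mean-zero normalization.

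I would encode this as the zero set of a smooth map
\begin{equation*}
    \Phi : C^{2,\beta}_0(X) \times C^{2,\beta}_0(X) \times (0, \infty) \to C^{0,\beta}_0(X) \times C^{0,\beta}_0(X),
\end{equation*}
where $C^{k,\beta}_0$ denotes mean-zero H\"older functions and $\Phi(\varphi, \psi, t)$ is the pair of mean-zero residuals of the two equations above. A direct computation shows that for any $(\varphi_0, \psi_0, t_0)$ with $\omega_0 := \hat\omega + i\ddb\varphi_0 > 0$, the linearization $D_{(\varphi,\psi)}\Phi|_{(\varphi_0, \psi_0, t_0)}$ has principal symbol a lower-triangular $2\times 2$ matrix with nonzero diagonal entries $|\xi|_{\omega_0}^2$ and $\kappa|\xi|_{\omega_0}^2$; the off-diagonal coupling comes only from the linearization of $\tr_\omega\alpha$ in $\omega$, and the $-\delta\psi$ in the first equation contributes no principal term. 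This linearization is therefore elliptic and Fredholm of index $0$ between the stated H\"older spaces.

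To extend the solution past $T$, I pick $t_0 \in (0, T)$ close to $T$ at which $D_{(\varphi,\psi)}\Phi$ is an isomorphism and invoke the IFT to produce a solution $(\varphi(t), \psi(t))$ for $t$ in a neighborhood of $t_0$; uniqueness in the IFT matches this extension with the given solution on $[0, t_0]$, yielding a solution on $[0, T + \varepsilon)$, with $C^\infty$ regularity restored by elliptic bootstrapping. The hard part is exhibiting such a $t_0$ arbitrarily close to $T$. For $t_0$ sufficiently close to $0$, the diagonal scalar $1/t_0$ dominates the rest of the linearization, giving invertibility. Since the coefficients depend real-analytically on $t_0$ (the equations being real-analytic in $t$ on $(0, \infty)$), analytic Fredholm theory then implies $D_{(\varphi,\psi)}\Phi$ is invertible on all of $(0, T)$ off a discrete exceptional set, from whose complement a suitable $t_0$ can be selected.
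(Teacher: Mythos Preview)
Your reduction to potentials and your computation of the lower-triangular principal symbol are correct, and this is essentially a more explicit version of what the paper does: the paper works directly at the level of the forms $(\omega,\alpha)$, observes that the highest-order terms of $-R_{\bar kj}$ and $\tfrac12\Delta\alpha_{\bar kj}$ are both $\partial^p\partial_p$ applied to the respective tensor, declares the map strictly elliptic, and then simply invokes ``elliptic theory'' without further detail. So at the level of identifying ellipticity the two approaches agree, yours being the scalar version.

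Where your argument diverges, and where it has a genuine gap, is the analytic Fredholm step. You assert that the coefficients of $D_{(\varphi,\psi)}\Phi$ depend real-analytically on $t_0$, but this linearization is taken \emph{at the solution} $(\varphi(t_0),\psi(t_0))$, so its coefficients are analytic in $t_0$ only if the solution itself is real-analytic in $t_0$. That in turn would follow from the analytic implicit function theorem once the linearization is known to be invertible---which is exactly what you are trying to establish. The argument is circular as written; you would need either a bootstrapping/connectedness argument on the maximal interval of real-analyticity, or a direct proof that the linearization is invertible at every $t$.

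There is a second, independent issue. Even granting an invertible linearization at some $t_0<T$, the implicit function theorem yields a solution only on $(t_0-\delta,\,t_0+\delta)$, and nothing in your argument bounds $\delta$ from below as $t_0\to T^-$: if $\omega(t)$ degenerates near $T$, the norm of the inverse and the local Lipschitz constants needed for the IFT can blow up. The paper does not address this either; in practice the lemma is used only as the openness half of a continuity argument, with closedness supplied by the a~priori estimates of \cref{higherorderestimates}.
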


\begin{proof}
    Expanding \eqref{equalityofriccis} shows that the highest order term of $-R_{\bar k j}$ is $\partial^p \partial_p g_{\bar k j}$, and a computation using \eqref{adjoint1}, \eqref{adjoint2} shows that the highest-order term of $\frac{1}{2} \Delta \alpha_{\bar k j}$ is $\partial^p \partial_p \alpha_{\bar k j}$. Thus the map $\mathcal H_+ \times \mathcal H \to \mathcal H_+ \times \mathcal H$ (where $\mathcal H$ is the space of closed $(1, 1)$-forms and $\mathcal H_+$ is the space of K\"ahler metrics) given by
    \begin{equation*}
        \begin{pmatrix} \omega \\ \alpha \end{pmatrix} \mapsto \begin{pmatrix} -\Ric(\omega) + \lambda \omega + \alpha - (\omega - \hat \omega)/t \\ \kappa \Delta \alpha - (\alpha - \hat \alpha)/t \end{pmatrix}
    \end{equation*}
    is strictly elliptic. Elliptic theory thus implies the result.
\end{proof}

\begin{lemma}[Stationary points]\label{stationarypoints}
    A pair $(\omega, \alpha)$ is a stationary point of the system \eqref{system1}, \eqref{system2} (in the sense that it makes the right-hand sides vanish) exactly when $\omega$ is cscK and $\alpha$ is $\omega$-harmonic.
\end{lemma}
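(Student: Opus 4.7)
The plan is to exploit the two tools already set up in the preceding material: the identity $\Delta\beta = i\ddb\tr_\omega\beta$ from \cref{ddbcontraction}, and the cohomological balance $-\underline R + \lambda n + \underline\tau = 0$ from \eqref{averages}, together with the elementary fact that on a compact K\"ahler manifold an $i\ddb$-closed function (equivalently, a harmonic function) is constant. Throughout I will freely use that the hypothesis $-c_1(X) + \lambda[\hat\omega] + [\hat\alpha] = 0$ is in force, since otherwise the notion of a stationary solution to \eqref{system1}, \eqref{system2} is inconsistent with \eqref{combinedcohomologysolution}.

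For the forward direction, assume $-\Ric(\omega) + \lambda\omega + \alpha = 0$ and $\kappa\Delta\alpha = 0$. The second equation, combined with $\kappa > 0$, immediately gives $\Delta\alpha = 0$, i.e.\ $\alpha$ is $\omega$-harmonic. Taking the trace of the first equation yields
\begin{equation*}
    -R + \lambda n + \tau = 0.
\end{equation*}
Now by \cref{ddbcontraction}, $i\ddb\tau = \Delta\alpha = 0$, so $\tau$ is constant on $X$ and hence equal to its average $\underline\tau$. Substituting into the displayed identity and using \eqref{averages} gives $R = \lambda n + \underline\tau = \underline R$, so $\omega$ is cscK.

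For the reverse direction, assume $\omega$ is cscK and $\alpha$ is $\omega$-harmonic. Then $\Delta\alpha = 0$ directly kills the RHS of \eqref{system2}. For the first equation, one more ingredient is needed: because $\Ric(\omega)$, $\omega$, and $\alpha$ are all closed and represent the trivial class $-c_1(X) + \lambda\Omega + A = 0$, the $\partial\bar\partial$-lemma gives $F \in C^\infty(X)$ with $-\Ric(\omega) + \lambda\omega + \alpha = i\ddb F$ as in \eqref{riccipotential1}. Taking the trace, $\Delta F = -R + \lambda n + \tau$. Now $R = \underline R$ by hypothesis, and $i\ddb\tau = \Delta\alpha = 0$ gives $\tau = \underline\tau$, so \eqref{averages} yields $\Delta F = 0$, whence $F$ is constant and $i\ddb F = 0$; that is, $-\Ric(\omega) + \lambda\omega + \alpha = 0$, as required.

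There is no real obstacle here; the only subtle point is that in the reverse direction one must not forget that vanishing of the trace of the first equation does not automatically give vanishing of the equation itself, and this is precisely what the potential $F$ supplied by the cohomological hypothesis resolves via the maximum principle applied to $\Delta F = 0$.
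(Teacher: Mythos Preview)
Your proof is correct and uses the same circle of ideas as the paper: \cref{ddbcontraction}, the potential $F$ from \eqref{riccipotential1}, and constancy of harmonic functions on a compact manifold. The only organizational difference is that the paper first records the auxiliary identity $-i\ddb R + \Delta\alpha = i\ddb\Delta F$ (obtained by applying $i\ddb$ to \eqref{riccipotential2}) and argues via that, whereas you work directly with the traced equation and invoke \eqref{averages} to identify $\Delta F$ (or $R$) explicitly rather than first showing it is constant and then using $\int_X \Delta F\,\omega^n = 0$. Your route is marginally more direct, but the two arguments are essentially the same.
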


\begin{proof}
    Applying \cref{ddbcontraction} to \eqref{riccipotential2} gives
    \begin{equation}\label{riccipotential3}
        -i \ddb R + \Delta \alpha = i \ddb \Delta F.
    \end{equation}
    \begin{enumerate}
        \item Assume $-\Ric(\omega) + \lambda \omega + \alpha = 0$ and $\Delta \alpha = 0$. The former with \eqref{riccipotential1} implies that $F$ is constant, so the latter with \eqref{riccipotential3} implies that $R$ is constant.
        
        \item Assume $\omega$ is cscK and $\alpha$ is $\omega$-harmonic. Then by \eqref{riccipotential3} we have $i \ddb \Delta F = 0$ so $\Delta F$ is constant. Since $\int_X \Delta F \, \omega^n = 0$ actually $\Delta F = 0$ so $F$ is constant. By \eqref{riccipotential1} we have $-\Ric(\omega) + \lambda \omega + \alpha = 0$.\qedhere
    \end{enumerate}
\end{proof}

\noindent We now give a proof of the trace and scalar curvature lower bounds:
\begin{proposition}[Scalar curvature lower bound]\label{traceestimates}
    Let $\hat \omega$, $\hat \alpha$ be a K\"ahler metric and a closed $(1, 1)$-form satisfying $-c_1(X) + \lambda [\hat \omega] + [\hat \alpha] = 0$. Assume that $\omega(t)$, $\alpha(t)$ solve \eqref{system1}, \eqref{system2} on $[0, T)$ for some $T \in (0, \infty]$. If $\hat \alpha \geq 0$ then $\tr_{\omega(t)} \alpha(t) \geq 0$ and
    \begin{equation*}
        R(t) \geq -C \left( 1 + \frac{1}{t} \right).
    \end{equation*}
\end{proposition}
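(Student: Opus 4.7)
The plan is to work with the traced system \eqref{tracesystem1}, \eqref{tracesystem2}, which already packages both equations as scalar PDEs, and to apply the minimum principle separately to each. The two statements split cleanly: the trace bound $\tau \geq 0$ follows from \eqref{tracesystem2} alone, and once this is established, the scalar curvature bound is a purely algebraic consequence of \eqref{tracesystem1}.

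For the first step, I would rewrite \eqref{tracesystem2} as
\begin{equation*}
    \tau - \kappa t \, \Delta \tau = \tr_\omega \hat \alpha.
\end{equation*}
Since $\hat \alpha \geq 0$ and $\omega(t)$ is a K\"ahler metric, the right-hand side is pointwise nonnegative. At each fixed $t > 0$ let $x_0 \in X$ be a point where $\tau(\cdot, t)$ attains its spatial minimum; then $\Delta \tau(x_0, t) \geq 0$, so evaluating the identity above at $x_0$ gives
\begin{equation*}
    \tau(x_0, t) = \tr_\omega \hat \alpha(x_0) + \kappa t \, \Delta \tau(x_0, t) \geq 0,
\end{equation*}
and hence $\tau \geq 0$ on all of $X$.

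For the second step, I would simply solve \eqref{tracesystem1} for $R$:
\begin{equation*}
    R = \frac{\tr_\omega \hat \omega - n}{t} + \lambda n + \tau = \frac{\tr_\omega \hat \omega}{t} - \frac{n}{t} + \lambda n + \tau.
\end{equation*}
Because $\hat \omega$ is K\"ahler we have $\tr_\omega \hat \omega > 0$, and by the previous step $\tau \geq 0$. Dropping these nonnegative terms yields
\begin{equation*}
    R \geq -\frac{n}{t} + \lambda n \geq -C \left( 1 + \frac{1}{t} \right)
\end{equation*}
for any constant $C \geq \max(n, -\lambda n)$.

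I do not expect a real obstacle here: the equations have already been reduced to their trace form in \eqref{tracesystem1}, \eqref{tracesystem2}, which makes the minimum principle directly applicable and forces $R$ to depend on $\tau$ only through a nonnegative term. The only mild subtlety is remembering that the elliptic minimum principle must be applied at each fixed $t > 0$ separately (rather than in a parabolic sense), and that the right-hand side $\tr_\omega \hat \alpha$ depends implicitly on $t$ through $\omega(t)$, but remains nonnegative for every $t$ by virtue of $\hat \alpha \geq 0$.
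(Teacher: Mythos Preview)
Your proposal is correct and follows essentially the same approach as the paper: apply the minimum principle to \eqref{tracesystem2} to get $\tau \geq \tr_\omega \hat\alpha \geq 0$, and then read off the scalar curvature bound algebraically from \eqref{tracesystem1} by dropping the nonnegative terms $\tr_\omega \hat\omega / t$ and $\tau$. The paper's proof is just a terser version of exactly what you wrote.
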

\begin{proof}
    At a minimum point of $\tau$ \eqref{tracesystem2} gives $\tau \geq \tr_\omega \hat \alpha \geq 0$. Now by \eqref{tracesystem1}
    \begin{equation*}
        R = \lambda n + \tau - \frac{n - \tr_\omega \hat \omega}{t} \geq n \left( \lambda - \frac{1}{t} \right).\qedhere
    \end{equation*}
\end{proof}

\section{Reduction to a scalar system}\label{section:reductiontoascalarsystem}

\noindent Since $\Omega = \hat \Omega$, $A = \hat A$ there are functions $\varphi(t)$, $f(t)$ with $\varphi(0) = 0$, $f(0) = 0$ so that
\begin{align}
    \omega & = \hat \omega + i \ddb \varphi\label{potential1}\\
    \alpha & = \hat \alpha + i \ddb f.\label{potential2}
\end{align}
\cref{ddbcontraction} shows that \eqref{system1}, \eqref{system2} become
\begin{align*}
    \frac{i \ddb \varphi}{t} & = i \ddb \left( \log \frac{\omega^n}{\hat \omega^n} + \lambda \varphi + f + \hat F \right)\\
    \frac{i \ddb f}{t} & = \kappa \, i \ddb \tau
\end{align*}
so that (after normalizing by adding time-varying constants to $\varphi$, $f$) we have the scalar system
\begin{align}
    \frac{\varphi}{t} & = \log \frac{(\hat \omega + i \ddb \varphi)^n}{\hat \omega^n} + \lambda \varphi + f + \hat F\label{potentialsystem1}\\
    \frac{f}{t} & = \kappa (\Delta f + \tr_\omega \hat \alpha).\label{potentialsystem2}
\end{align}

\begin{lemma}[A priori estimates for potentials]\label{potentialestimates}
    Let $\hat \omega$, $\hat \alpha$ be a K\"ahler metric and a closed $(1, 1)$-form satisfying $-c_1(X) + \lambda [\hat \omega] + [\hat \alpha] = 0$ with $\lambda \leq 0$. Let $\varphi(t)$, $f(t)$ be functions and define $\omega(t)$, $\alpha(t)$ by \eqref{potential1}, \eqref{potential2}. If $\varphi(t)$, $f(t)$ solve \eqref{potentialsystem1}, \eqref{potentialsystem2} with $\varphi(0) = 0$, $f(0) = 0$ on $[0, T)$ for some $T \in (0, \infty]$ then
    \begin{equation}\label{potentialestimates1}
        \| (1/t - \lambda) \varphi(t) \|_{C^0} \leq \| f(t) \|_{C^0} + \| \hat F \|_{C^0}
    \end{equation}
    and
    \begin{equation}\label{potentialestimates2}
        \| f(t) / t \|_{C^0} \leq \kappa \| { \tr_{\omega(t)} \hat \alpha } \|_{C^0}.
    \end{equation}
\end{lemma}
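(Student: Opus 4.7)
The plan is to derive both estimates by straightforward applications of the maximum principle, exploiting the elliptic (rather than parabolic) nature of the system and the sign hypothesis $\lambda \leq 0$.

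For \eqref{potentialestimates2}, I would rewrite \eqref{potentialsystem2} as
\begin{equation*}
    \frac{f}{t} - \kappa \Delta f = \kappa \tr_\omega \hat \alpha.
\end{equation*}
At a point where $f$ attains its maximum on $X$, $\Delta f \leq 0$, so the left-hand side is at least $f_{\max}/t$, yielding $f_{\max}/t \leq \kappa \|\tr_\omega \hat\alpha\|_{C^0}$. At a minimum point, $\Delta f \geq 0$ gives the matching lower bound, and together these yield \eqref{potentialestimates2}.

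For \eqref{potentialestimates1}, I would rearrange \eqref{potentialsystem1} as
\begin{equation*}
    \left( \frac{1}{t} - \lambda \right) \varphi = \log \frac{\omega^n}{\hat \omega^n} + f + \hat F,
\end{equation*}
noting that the coefficient $1/t - \lambda$ is strictly positive because $t > 0$ and $\lambda \leq 0$ (this is the only place the sign hypothesis enters). At a maximum point of $\varphi$, the Hermitian matrix $i\ddb\varphi$ is nonpositive, so $\omega = \hat\omega + i\ddb\varphi \leq \hat\omega$ as Hermitian forms, and hence $\det g \leq \det \hat g$, i.e., $\omega^n \leq \hat\omega^n$, giving $\log(\omega^n/\hat\omega^n) \leq 0$. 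This yields $(1/t - \lambda)\varphi_{\max} \leq \|f\|_{C^0} + \|\hat F\|_{C^0}$. The symmetric argument at a minimum point, where $\omega \geq \hat\omega$ forces $\log(\omega^n/\hat\omega^n) \geq 0$, produces the lower bound $(1/t - \lambda)\varphi_{\min} \geq -\|f\|_{C^0} - \|\hat F\|_{C^0}$. Combining these with the positivity of $1/t - \lambda$ gives \eqref{potentialestimates1}.

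There is essentially no serious obstacle here: both equations are semilinear in a way that makes the maximum principle apply directly, since the unknown potentials appear with favorable signs on the right-hand side modulo a term controlled by the Laplacian or the Monge-Amp\`ere operator. The only subtle point to flag is that one must verify the determinantal inequality for Hermitian matrices used in the first estimate, which follows from simultaneous diagonalization of $\hat g$ and $g$.
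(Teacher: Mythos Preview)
Your proposal is correct and follows exactly the approach the paper intends: the paper's proof consists of the single sentence ``These follow by applying the maximum principle to \eqref{potentialsystem1}, \eqref{potentialsystem2},'' together with the remark that $\lambda \leq 0$ is used only in the first estimate, and you have filled in precisely those details. The sign conventions (in particular $\Delta h \leq 0$ at a maximum, consistent with the paper's $\Delta h = \partial^p \partial_p h$) and the determinantal inequality for the Monge--Amp\`ere term are handled correctly.
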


\begin{proof}
    These follow by applying the maximum principle to \eqref{potentialsystem1}, \eqref{potentialsystem2}. Note that the assumption $\lambda \leq 0$ is used in the first estimate.
\end{proof}

\section{Higher order estimates and convergence}\label{section:higherorderestimatesandconvergence}

\begin{lemma}[Higher order estimates]\label{higherorderestimates}
    Let $\hat \omega$, $\hat \alpha$ be a K\"ahler metric and a closed $(1, 1)$-form satisfying $-c_1(X) + \lambda [\hat \omega] + [\hat \alpha] = 0$ with $\lambda \leq 0$. If the system \eqref{system1}, \eqref{system2} satisfies an a priori estimate of the form
    \begin{equation}\label{uniformequivalence}
        C^{-1} \hat \omega \leq \omega(t) \leq C \hat \omega
    \end{equation}
    then we have the following a priori potential estimates for each $k = 0, 1, \dotsc$
    \begin{equation*}
        \| \varphi(t) \|_{C^k} \leq C_k, \quad  \| f(t) \|_{C^k} \leq C_k.
    \end{equation*}
\end{lemma}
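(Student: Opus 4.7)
The strategy is to bootstrap from the uniform bound \eqref{uniformequivalence} by working with the scalar system \eqref{potentialsystem1}, \eqref{potentialsystem2}. First, \eqref{uniformequivalence} implies that the components $g^{\bar k j}$ of the inverse metric are uniformly bounded, so $\tr_\omega \hat \alpha = g^{\bar k j} \hat \alpha_{\bar k j}$ is uniformly bounded in $C^0$ since $\hat \alpha$ is fixed and smooth. Combined with \cref{potentialestimates}, this produces the $k = 0$ estimates on $\varphi$ and $f$ (the constants being allowed to depend on $t$ through \eqref{potentialestimates1}, \eqref{potentialestimates2}).

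To obtain $k \geq 2$ estimates, I would begin by promoting $f$ to a Hölder class. Rearrange \eqref{potentialsystem2} as
\begin{equation*}
    \kappa \Delta f - \frac{f}{t} = -\kappa \tr_\omega \hat \alpha,
\end{equation*}
a linear uniformly elliptic equation in non-divergence form whose leading coefficients $g^{\bar k j}$ are known only to lie in $L^\infty$ and whose zero-order coefficient has a favorable sign for the maximum principle. The Krylov--Safonov estimate then yields $f \in C^{0,\alpha}$ for some $\alpha \in (0,1)$. Independently, \eqref{uniformequivalence} gives $n + \Delta_{\hat \omega} \varphi = \tr_{\hat \omega} \omega \leq C$ and $i \ddb \varphi > -\hat \omega$, hence a uniform $C^2$ bound on $\varphi$. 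Viewing \eqref{potentialsystem1} as the complex Monge--Ampère equation
\begin{equation*}
    (\hat \omega + i \ddb \varphi)^n = e^{\varphi / t - \lambda \varphi - f - \hat F} \hat \omega^n,
\end{equation*}
whose right-hand side is now $C^{0,\alpha}$, the complex Evans--Krylov theorem promotes the $C^2$ bound to a $C^{2,\alpha}$ bound on $\varphi$.

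With these two footholds in hand, the remaining $C^k$ estimates follow by standard iteration. Since $\varphi \in C^{2,\alpha}$, the coefficients $g_{\bar k j}$ and $g^{\bar k j}$ of the equation for $f$ are in $C^{0,\alpha}$, so Schauder theory upgrades $f$ to $C^{2,\alpha}$. This in turn makes the right-hand side of the Monge--Ampère equation $C^{2,\alpha}$; differentiating the equation and applying Schauder to the linear elliptic equation satisfied by $\partial_\ell \varphi$ yields $\varphi \in C^{3,\alpha}$, and so on, each pass gaining one derivative of Hölder regularity in both potentials. The main technical obstacle is the very first round of this bootstrap: the only available information about the leading coefficients of the equation for $f$ is pointwise boundedness, which rules out a direct appeal to Schauder theory, and the initial Hölder estimate must therefore be extracted from a Krylov--Safonov-type theorem before the subsequent bootstrap becomes routine.
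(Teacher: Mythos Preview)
Your approach is essentially the paper's: extract a H\"older estimate for $f$ from \eqref{potentialsystem2} viewed as a linear elliptic equation with merely bounded coefficients (you invoke Krylov--Safonov, the paper says De~Giorgi--Nash--Moser citing \cite{gilbarg1977elliptic}; yours is arguably the correct label for the non-divergence form), then upgrade $\varphi$ to $C^{2,\alpha}$ via complex Monge--Amp\`ere regularity (your ``complex Evans--Krylov'' is what the paper cites as \cite{wang2011remark}), and finally bootstrap by differentiating \eqref{potentialsystem1} and alternating Schauder estimates between the two equations. The one caveat is your parenthetical that the $C^0$ constants are ``allowed to depend on $t$'': the lemma feeds into the proof of \cref{convergence} via Arzel\`a--Ascoli, so the $C_k$ must be uniform in $t$; note that the paper arranges its argument to control the uniformly bounded quantities $f/t$ and $(1/t-\lambda)\varphi$ from \cref{potentialestimates} rather than $f$ and $\varphi$ themselves.
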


\begin{proof}
    Recall \eqref{potentialsystem1}
    \begin{equation}\label{nonlinearequation}
        \frac{\varphi}{t} = \log \frac{(\hat \omega + i \ddb \varphi)^n}{\hat \omega^n} + \lambda \varphi + f + \hat F.
    \end{equation}
    Applying $\partial / \partial z^\ell$ and restating \eqref{potentialsystem2} results in the linear system
    \begin{gather}
        \frac{\varphi_\ell}{t} = \Delta \varphi_\ell + \lambda \varphi_\ell + f_\ell + g^{j \bar k} \partial_\ell \hat g_{\bar k j} - \hat g^{j \bar k} \partial_\ell \hat g_{\bar k j} + \hat F_\ell\label{linearsystem1}\\
        \frac{f}{t} = \kappa (\Delta f + \tr_\omega \hat \alpha).\label{linearsystem2}
    \end{gather}
    By \eqref{potentialestimates2}, \eqref{uniformequivalence} we obtain a $C^0$ estimate for $f / t$, so we have $C^0$ estimates for the coefficients of \eqref{linearsystem2}. De Giorgi-Nash-Moser theory \cite{gilbarg1977elliptic} gives a $C^\beta$ estimate for $f$ for some $0 < \beta < 1$.

    Now \eqref{potentialestimates1} provides a $C^0$ estimate for $(1/t - \lambda) \varphi$, so we have $C^0$ estimates for the coefficients of \eqref{nonlinearequation}. The regularity result \cite{wang2011remark} gives a $C^{2, \alpha}$ estimate for $\varphi$ for any $0 < \alpha < \beta$.
    
    Therefore we have $C^\alpha$ estimates for the coefficients of \eqref{linearsystem2}, so Schauder estimates give $\| f \|_{C^{2, \alpha}} \leq C$ and hence $\| f_\ell \|_{C^{1, \alpha}} \leq C$. Thus we have $C^\alpha$ estimates for the coefficients of \eqref{linearsystem1}, so Schauder estimates give $\| \varphi_\ell \|_{C^{2, \alpha}} \leq C$ and hence $\| \varphi \|_{C^{3, \alpha}} \leq C$. Continuing in this way results in $C^{k, \alpha}$ estimates of all orders for $\varphi$ and $f$.
\end{proof}

\begin{proof}[Proof of \cref{convergence}]
    By \cref{higherorderestimates} we obtain $C^k$ estimates of all orders on $\varphi$, $f$. These estimates together with \cref{shorttimeexistence} imply, by the method of continuity, that the set of $t$ for which the system is solvable is both open and closed in $[0, \infty)$ and thus that it is the entire interval. After two applications of the Arzela-Ascoli theorem we obtain a sequence $t_j \to \infty$ such that $\varphi(t_j)$, $f(t_j)$ converge smoothly to functions $\varphi_\infty$, $f_\infty$. Let $\omega_\infty = \hat \omega + i \ddb \varphi_\infty$ and $\alpha_\infty = \hat \alpha + i \ddb f_\infty$. Note $\omega_\infty > 0$ since $\omega \geq C^{-1} \hat \omega$; thus $\omega_\infty \in \Omega$ is a K\"ahler metric and $\alpha_\infty \in A$ is a closed $(1, 1)$-form. Since the left-hand sides of \eqref{system1}, \eqref{system2} tend to zero \cref{stationarypoints} implies that $\omega_\infty$ is cscK and $\alpha_\infty$ is $\omega_\infty$-harmonic. By our assumption there is a unique cscK metric in $[\hat \omega]$, and by the Hodge theorem there is a unique $\omega_\infty$-harmonic $(1, 1)$-form in $[\hat \alpha]$. Thus $\omega_\infty$, $\alpha_\infty$ are independent of the choice of subsequence, so in fact $\omega$, $\alpha$ converge smoothly to $\omega_\infty$, $\alpha_\infty$.
\end{proof}

\section{K\"ahler-Einstein metrics}\label{section:kahlereinsteinmetrics}

\noindent In this section we demonstrate how a special case of \cref{convergence} may be used to obtain convergence to a K\"ahler-Einstein metric when one exists. Specifically, we now assume that $-c_1(X) + \lambda \Omega = 0$ and that $\hat \alpha = 0$. Since $\Delta$ lacks positive eigenvalues, \eqref{system2} shows that $\alpha = 0$. Thus the system reduces to a single equation
\begin{equation}\label{continuityequation}
    \frac{\omega - \hat \omega}{t} = -\Ric(\omega) + \lambda \omega
\end{equation}
Similarly, the potential system reduces to
\begin{equation}\label{potentialequation}
    \frac{\varphi}{t} = \log \frac{(\hat \omega + i \ddb \varphi)^n}{\hat \omega^n} + \lambda \varphi + \hat F.
\end{equation}
In order to satisfy the hypothesis of \cref{convergence}, we provide the following standard $C^2$ estimate for the sake of completeness (see e.g. \cite{yau78}). 

\begin{lemma}[$C^2$ estimate]\label{c2estimate}
    Assume $c_1(X) < 0$. Let $\hat \omega$ be a K\"ahler metric satisfying $-c_1(X) + \lambda [\hat \omega] = 0$ with $\lambda < 0$. Let $\varphi(t)$ be a smooth function and define $\omega(t)$ by \eqref{potential1}. If $\varphi(t)$ solves \eqref{potentialequation} with $\varphi(0) = 0$ on $[0, T)$ for some $T \in (0, \infty]$ then
    \begin{equation*}
        C^{-1} \hat \omega \leq \omega(t) \leq C \hat \omega.
    \end{equation*}
\end{lemma}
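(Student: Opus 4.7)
The plan is to apply the classical Aubin--Yau maximum-principle argument to the auxiliary quantity $H := \log \tr_{\hat\omega}\omega - A\varphi$, where $A > 0$ will be chosen depending only on a lower bound for the bisectional curvature of $\hat\omega$. First, since $\hat\alpha = 0$, equation \eqref{system2} together with the fact that $\Delta$ has no positive eigenvalues forces $\alpha \equiv 0$, and hence $f \equiv 0$. Specializing \cref{potentialestimates}, the estimate \eqref{potentialestimates1} reads $\|(1/t - \lambda)\varphi\|_{C^0} \leq \|\hat F\|_{C^0}$, which under $\lambda < 0$ delivers the uniform bounds $\|\varphi\|_{C^0} \leq \|\hat F\|_{C^0}/|\lambda|$ and $\|\varphi/t\|_{C^0} \leq \|\hat F\|_{C^0}$. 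Substituting into \eqref{potentialequation} shows that $F := \log(\omega^n/\hat\omega^n) = \varphi/t - \lambda \varphi - \hat F$ is uniformly $C^0$-bounded, so the volume ratio $\omega^n / \hat\omega^n$ is pinched between two positive constants.

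Next, direct differentiation gives $\Delta_{\hat\omega} F = (1/t - \lambda)(\tr_{\hat\omega}\omega - n) - \Delta_{\hat\omega}\hat F$, while $\Delta_\omega \varphi = n - \tr_\omega\hat\omega$. Combining these with the standard Aubin--Yau inequality
\[
\Delta_\omega \log \tr_{\hat\omega}\omega \;\geq\; \frac{\Delta_{\hat\omega} F}{\tr_{\hat\omega}\omega} - C_0\, \tr_\omega\hat\omega,
\]
taking $A = C_0 + 1$, and using $\Delta_\omega H \leq 0$ at a maximum of $H$, we obtain
\[
0 \;\geq\; \tr_\omega\hat\omega \;+\; \frac{(1/t - \lambda)(\tr_{\hat\omega}\omega - n) - \Delta_{\hat\omega}\hat F}{\tr_{\hat\omega}\omega} \;-\; An.
\]
The decisive observation is $1/t - \lambda > 0$: assuming $\tr_{\hat\omega}\omega \geq 2n$ at the maximum (the opposite case being trivial), the middle term is bounded below independent of $t$, yielding $\tr_\omega\hat\omega \leq C$ there. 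The Maclaurin-type inequality $\tr_{\hat\omega}\omega \leq n^{2 - n}(\omega^n/\hat\omega^n)(\tr_\omega\hat\omega)^{n - 1}$ combined with the volume-ratio pinch then also bounds $\tr_{\hat\omega}\omega$ at the maximum. Since $\varphi$ is uniformly $C^0$-bounded, this bounds $H$ globally and hence $\tr_{\hat\omega}\omega \leq C$ everywhere, giving $\omega \leq C \hat\omega$. For the matching lower estimate, every eigenvalue of $\omega$ relative to $\hat\omega$ now lies in $[0, C]$, while their product $\omega^n/\hat\omega^n$ is bounded below by a positive constant; thus each eigenvalue is bounded below by some $c > 0$, so $\omega \geq C^{-1}\hat\omega$.

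The only real deviation from the classical Yau $C^2$ estimate is the apparently degenerate factor $1/t$ appearing in $\Delta_{\hat\omega} F$. The hypothesis $\lambda < 0$ is essential twice over: it ensures $1/t - \lambda > 0$, supplying the favorable sign at the maximum, and it keeps $\|\varphi\|_{C^0}$ bounded uniformly as $t \to \infty$, which is what allows all constants in the argument to be made independent of $t \in [0,T)$.
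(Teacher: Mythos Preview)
Your proof is correct and follows essentially the same Aubin--Yau maximum-principle argument as the paper, applied to the same test function $\log\tr_{\hat\omega}\omega - A\varphi$. The only cosmetic difference is that the paper invokes the inequality in the form $\Delta_\omega\log\tr_{\hat\omega}\omega \geq -\tr_{\hat\omega}\Ric(\omega)/\tr_{\hat\omega}\omega - C\,\tr_\omega\hat\omega$ and substitutes $\Ric(\omega)$ directly from the continuity equation \eqref{continuityequation}, whereas you use the equivalent $\Delta_{\hat\omega}F$ form and substitute the potential equation; your treatment of the $1/t$ factor via the case split $\tr_{\hat\omega}\omega \gtrless 2n$ is a bit more explicit than the paper's, but the substance is identical.
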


\begin{proof}
    First observe that since $\hat \alpha = 0$, we may assume that the potential function $f$ is identically zero for all time and so we immediately obtain $C^0$ bounds on $\varphi$ by using \cref{potentialestimates}. This bound is then equivalent to the volume form bound
    \begin{equation*}
        C^{-1} \leq \frac{\omega^n}{\hat \omega^n} \leq C.
    \end{equation*}
    A standard computation using the Aubin-Yau inequality yields
    \begin{equation*}
        \Delta \log \tr_{\hat \omega} \omega \geq -\frac{\hat g^{p\bar q} R_{\bar qp}}{\tr_{\hat \omega} \omega} - C \tr_\omega \hat \omega.
    \end{equation*}
    Using \eqref{continuityequation} gives
    \begin{equation*}
        \Delta \log \tr_{\hat \omega} \omega \geq -C \tr_\omega \hat \omega - C.
    \end{equation*}
    Now $\Delta \varphi = n - \tr_\omega \hat \omega$ so
    \begin{align*}
        \Delta ( \log \tr_{\hat \omega} \omega - A \varphi )
        & \geq (A - C) \tr_\omega \hat \omega - An - C\\
        & \geq \tr_\omega \hat \omega - C.
    \end{align*}
    for $A$ large enough. At a maximum point of $\log \tr_{\hat \omega} \omega - A \varphi$ we thus have $\tr_\omega \hat \omega \leq C$, and by the determinant estimate also $\tr_{\hat \omega} \omega \leq C$. At an arbitrary point this implies
    \begin{equation*}
        \log \tr_{\hat \omega} \omega \leq 2 A \| \varphi \|_{C^0} + C
    \end{equation*}
    so by \cref{potentialestimates} we obtain $\tr_{\hat \omega} \omega \leq C$. Another application of the determinant estimate gives $\tr_\omega \hat \omega \leq C$. These two estimates are equivalent to the desired ones.
\end{proof}

\begin{remark}
    In the case when $c_1(X) = 0$, the maximum principle immediately yields a $C^0$ estimate of the form $\| \varphi \|_{C^0} \leq C t$. This is not strong enough to prove the desired $C^2$ estimate, but it is interesting nonetheless since the maximum principle for the ordinary complex Monge-Amp\`ere equation provides no information in this case.
\end{remark}

\noindent Combining \cref{convergencewhenc1negative} and \cref{c2estimate}, this proves \cref{KEconvergence}.

\section{Riemann surfaces}\label{section:riemannsurfaces}

\noindent Assume $n = 1$. We may write $\omega = e^\phi \hat \omega$ for a function $\phi(t)$ with $\phi(0) = 0$, and we have $\alpha = \tau \omega$. In this setting, the coupled equations \eqref{tracesystem1}, \eqref{tracesystem2} can be written as
\begin{align}
    \frac{1 - e^{-\phi}}{t} & = -R + \lambda + \tau\label{conformalsystem1a}\\
    \frac{\tau - \hat \tau e^{-\phi}}{t} & = \kappa \Delta \tau.\label{conformalsystem2a}
\end{align}
By using $R = \hat R e^{-\phi} - \Delta \phi$ we can alternatively write \eqref{conformalsystem1a} as
\begin{equation}\label{conformalsystem1b}
    \frac{1 - e^{-\phi}}{t} = \Delta \phi - \hat R e^{-\phi} + \lambda + \tau.
\end{equation}
Let us denote $\tau \maxx = \max_X \tau$ and $\tau \minn = \min_X \tau$. In addition, let us use the convention that $h^+ = \max{ \{ h , 0 \} }$ and $h^- = \min{ \{ h , 0 \} }$.

\begin{lemma}[A priori estimates for conformal factors]\label{conformalestimates}
    Let $X$ be a Riemann surface, $\hat \omega$ a K\"ahler metric, $\hat \alpha$ a closed $(1, 1)$-form, $\lambda = \int_X (c_1(X) - A) / \int_X [\hat \omega]$, and assume that $\hat \tau \maxx^+ - \hat \tau \minn^- < -\hat R \maxx$. Let $\phi(t)$, $\tau(t)$ be functions, set $\omega(t) = e^{\phi(t)} \hat \omega$, $\alpha(t) = \tau(t) \omega(t)$, and assume that $\phi(t)$, $\tau(t)$ solve \eqref{conformalsystem1b}, \eqref{conformalsystem2a} with $\phi(0) = 0$ on $[0, T)$ for some $T \in (0, \infty]$. Then $\lambda < 0$ and
    \begin{equation}\label{conformalestimates1}
        \| \phi(t) \|_{C^0} \leq C, \quad \| \tau(t) \|_{C^0} \leq C
    \end{equation}
    so
    \begin{equation}\label{conformalestimates2}
        C^{-1} \hat \omega \leq \omega(t) \leq C \hat \omega, \quad -C \omega(t) \leq \alpha(t) \leq C \omega(t)
    \end{equation}
    and also
    \begin{equation}\label{conformalestimates3}
        \| R(t) \|_{C^0} \leq C \left( 1 + \frac{1}{t} \right).
    \end{equation}
\end{lemma}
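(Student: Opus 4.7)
The plan is to apply the maximum principle to \eqref{conformalsystem2a} and \eqref{conformalsystem1b} in sequence. The core difficulty is that each equation estimates one of $\phi$, $\tau$ only in terms of the other, so a circular system of bounds must be closed, and this is precisely where the hypothesis enters.

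First I would note that $\hat\tau\maxx^+ - \hat\tau\minn^- \geq 0$ together with the hypothesis forces $\hat R\maxx < 0$, and then using \eqref{lambda} with $\underline R \leq \hat R\maxx$ and $\underline\tau \geq \hat\tau\minn^-$ gives
\begin{equation*}
    \lambda \leq \hat R\maxx - \hat\tau\minn^- < -\hat\tau\maxx^+ \leq 0,
\end{equation*}
so $\lambda < 0$. Next, applying the maximum principle to \eqref{conformalsystem2a} at the extrema of $\tau$, after a short case split on the sign of $\hat\tau$ at those points, yields the pointwise bounds
\begin{equation*}
    \hat\tau\minn^- e^{-\phi\minn} \leq \tau \leq \hat\tau\maxx^+ e^{-\phi\minn}.
\end{equation*}

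I would then substitute the lower bound into \eqref{conformalsystem1b} at a minimum of $\phi$, writing $b := e^{-\phi\minn}$, to obtain a linear inequality of the form $b\bigl(1 + t(\hat\tau\minn^- - \hat R\maxx)\bigr) \leq 1 - t\lambda$. Here the hypothesis enters decisively: it guarantees $\hat\tau\minn^- - \hat R\maxx > \hat\tau\maxx^+ \geq 0$, so the coefficient of $b$ is strictly positive, $b$ is uniformly bounded on $[0, \infty)$, and hence $\phi\minn \geq -C$. Combined with the trace bounds above this gives $\|\tau\|_{C^0} \leq C$; a bit more care with the bound for $b$ extracts the strict inequality $\tau\maxx < -\lambda$.

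Finally, applying the maximum principle to \eqref{conformalsystem1b} at a maximum of $\phi$ and writing $a := e^{-\phi\maxx}$ produces an inequality of the form $a(1 + t|\hat R(x_0)|) \geq 1 + t(-\lambda - \tau\maxx)$. Since the right-hand side is a positive linear function of $t$, $a$ is bounded below uniformly in $t \geq 0$, yielding $\phi\maxx \leq C$. The bounds \eqref{conformalestimates2} then follow from $\omega = e^\phi\hat\omega$ and $\alpha = \tau\omega$, and \eqref{conformalestimates3} from solving \eqref{conformalsystem1a} for $R = \lambda + \tau - (1 - e^{-\phi})/t$. The main obstacle is precisely this closing of the circular dependence between the $\phi$- and $\tau$-estimates; the hypothesis is calibrated exactly so that the coefficient appearing in the $\phi\minn$-inequality exceeds $\hat\tau\maxx^+$ and the resulting loop terminates with bounds independent of $t$.
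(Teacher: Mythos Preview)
Your proposal is correct and follows the same route as the paper: first bound $\tau$ in terms of $e^{-\phi\minn}$ via the maximum principle on \eqref{conformalsystem2a}, then feed this into \eqref{conformalsystem1b} at $\phi\minn$ to bound $e^{-\phi\minn}$ by a constant, substitute back to get a $t$-independent bound on $\tau$, and finally apply the maximum principle on \eqref{conformalsystem1b} at $\phi\maxx$. The paper tracks the explicit constants (obtaining $e^{-\phi\minn} \le \lambda/(\hat R\maxx - \hat\tau\minn^-)$ and $\tau \le \lambda\hat\tau\maxx^+/(\hat R\maxx - \hat\tau\minn^-)$) rather than using your $a,b$ notation, but the logic is identical. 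One small point worth making explicit: your ``strict inequality $\tau\maxx < -\lambda$'' must be understood as a \emph{uniform} gap $-\lambda - \tau\maxx \ge \delta > 0$ independent of $t$; this does follow from your uniform bound $b \le -\lambda/(\hat\tau\minn^- - \hat R\maxx)$ together with the strict hypothesis, but the step ``the right-hand side is a positive linear function of $t$'' in the $\phi\maxx$ argument fails without it.
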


\begin{proof}
    First we prove some inequalities on some of the constants that will appear below. The assumption implies $\hat R \maxx - \hat \tau \minn^- < 0$, which is equivalent to
    \begin{gather*}
        \hat R \maxx < 0\\
        \hat R \maxx - \hat \tau \minn < 0.
    \end{gather*}
    By \eqref{lambda}
    \begin{equation*}
        \lambda = \underline R - \underline \tau \leq \hat R \maxx - \hat \tau \minn \leq \hat R \maxx - \hat \tau \minn^- < 0.
    \end{equation*}
    It follows that
    \begin{equation*}
        0 < \frac{\hat R \maxx - \hat \tau \minn^-}{\lambda} \leq 1.
    \end{equation*}
    Equation \eqref{conformalsystem2a} implies the following preliminary estimates for $\tau$
    \begin{align}
        \tau & \geq \hat \tau \minn^- e^{-\phi \minn}\label{tauprelimlowerbound}\\
        \tau & \leq \hat \tau \maxx^+ e^{-\phi \minn}.\label{tauprelimupperbound}
    \end{align}
    By \eqref{conformalsystem1b}, at a minimum point of $\phi$
    \begin{equation*}
        \frac{1 - e^{-\phi}}{t} \geq -\hat R \maxx e^{-\phi} + \lambda + \tau.
    \end{equation*}
    Substituting \eqref{tauprelimlowerbound} gives
    \begin{equation*}
        \frac{1 - e^{-\phi}}{t} \geq -(\hat R \maxx - \hat \tau \minn^-) e^{-\phi} + \lambda.
    \end{equation*}
    Since $\lambda \leq \hat R \maxx - \hat \tau \minn^- < 0$ we obtain the following lower bound for $\phi$
    \begin{equation}\label{philowerbound}
        e^{-\phi} \leq \frac{\lambda}{\hat R \maxx - \hat \tau \minn^-}.
    \end{equation}
    Substituting this into \eqref{tauprelimlowerbound}, \eqref{tauprelimupperbound} establishes
    \begin{equation}\label{taubounds}
        \frac{\lambda \hat \tau \minn^-}{\hat R \maxx - \hat \tau \minn^-} \leq \tau \leq \frac{\lambda \hat \tau \maxx^+}{\hat R \maxx - \hat \tau \minn^-}.
    \end{equation}
    By \eqref{conformalsystem1b}, at a maximum point of $\phi$
    \begin{equation*}
        \frac{1 - e^{-\phi}}{t} \leq -\hat R \minn e^{-\phi} + \lambda + \tau.
    \end{equation*}
    Substituting \eqref{taubounds} gives
    \begin{equation*}
        \frac{1 - e^{-\phi}}{t} \leq -\hat R \minn e^{-\phi} + \lambda + \frac{\lambda \hat \tau \maxx^+}{\hat R \maxx - \hat \tau \minn^-}.
    \end{equation*}
    Since $\hat R \minn < 0$ and (by our assumption) $1 + \hat \tau \maxx^+ / (\hat R \maxx - \hat \tau \minn^-) > 0$ we obtain the following upper bound for $\phi$
    \begin{align*}
        e^{-\phi}
        & \geq \frac{\lambda + \lambda \hat \tau \maxx^+ / (\hat R \maxx - \hat \tau \minn^-) - 1/t}{\hat R \minn - 1/t}\\
        & \geq \min{ \left\{ \frac{\lambda + \lambda \hat \tau \maxx^+ / (\hat R \maxx - \hat \tau \minn^-)}{\hat R \minn} , 1 \right\} }.
    \end{align*}
    The scalar curvature estimates follow from \eqref{conformalsystem1a}.
\end{proof}

\begin{remark}
    Only the estimates $\phi \leq C$, $\omega \leq C \hat \omega$, $R \geq -C ( 1 + 1/t )$ need the full strength of this assumption; the others need only $\hat R \maxx - \hat \tau \minn^- < 0$.
\end{remark}

\begin{theorem}[Smooth convergence]\label{riemannsurfacesconvergence}
    Let $X$ be a Riemann surface with $c_1(X) < 0$, $\hat \omega$ a K\"ahler metric, $\hat \alpha$ a closed $(1, 1)$-form, $\lambda = \int_X (c_1(X) - [\hat \alpha]) / \int_X [\hat \omega]$, and assume that $\hat \tau \maxx^+ - \hat \tau \minn^- < -\hat R \maxx$. The system \eqref{system1}, \eqref{system2} has a unique solution on $[0, \infty)$ which converges smoothly to the unique cscK metric $\omega_\infty$ in $[\hat \omega]$ and the unique $\omega_\infty$-harmonic $(1, 1)$-form $\alpha_\infty$ in $[\hat \alpha]$.
\end{theorem}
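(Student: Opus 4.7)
The plan is to verify the hypotheses of \cref{convergence} with the aid of \cref{conformalestimates}, and then invoke the former directly. There are three items to check: the cohomological identity $-c_1(X) + \lambda[\hat\omega] + [\hat\alpha] = 0$ with $\lambda \leq 0$, the uniqueness of a cscK metric in $[\hat\omega]$, and the uniform equivalence $C^{-1}\hat\omega \leq \omega(t) \leq C\hat\omega$ along every solution.

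For the cohomological identity, I would use that $H^{1,1}(X,\mathbb{R})$ is one-dimensional for a Riemann surface, so that the integration map $[\beta] \mapsto \int_X \beta$ is a linear isomorphism onto $\mathbb{R}$. Substituting the prescribed value of $\lambda$ into $\int_X\bigl(-c_1(X) + \lambda[\hat\omega] + [\hat\alpha]\bigr)$ causes the terms to cancel in pairs, so this integral vanishes and hence the class itself vanishes. The strict inequality $\lambda < 0$ is then immediate from the opening lines of the proof of \cref{conformalestimates}, which derive $\lambda < 0$ directly from the standing assumption $\hat\tau\maxx^+ - \hat\tau\minn^- < -\hat R\maxx$.

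For the uniqueness of a cscK metric in $[\hat\omega]$, I would observe that, since $c_1(X) < 0$ and $H^{1,1}(X,\mathbb{R}) \cong \mathbb{R}$, the class $[\hat\omega]$ must be a positive real multiple of $-c_1(X)$. By \cref{convergencewhenc1negative}, uniqueness then holds automatically (the cscK metric is the K\"ahler-Einstein metric, whose existence and uniqueness on a Riemann surface of genus at least two follow from uniformization).

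For the uniform equivalence, the hypothesis $\hat\tau\maxx^+ - \hat\tau\minn^- < -\hat R\maxx$ is exactly what \cref{conformalestimates} requires, so \eqref{conformalestimates2} supplies the two-sided bound on $\omega(t)$ for any solution. With all three items in place, \cref{convergence} applies and yields the claim: the coupled system has a unique solution on $[0,\infty)$ which converges smoothly to the unique cscK metric $\omega_\infty$ in $[\hat\omega]$ and the unique $\omega_\infty$-harmonic representative $\alpha_\infty$ in $[\hat\alpha]$. The only nontrivial work lies inside \cref{conformalestimates} itself (the balancing act between the upper and lower bounds on $\phi$ and $\tau$ at their simultaneous extrema); once those conformal-factor estimates are in hand, the present theorem is essentially a corollary of \cref{convergence}.
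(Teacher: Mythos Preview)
Your proposal is correct and follows essentially the same route as the paper: invoke \cref{conformalestimates} to obtain both $\lambda<0$ and the uniform bound $C^{-1}\hat\omega\le\omega(t)\le C\hat\omega$, observe (as in \cref{convergencewhenc1negative}) that $[\hat\omega]$ is a positive multiple of $-c_1(X)$ so the cscK uniqueness hypothesis holds, and then apply \cref{convergence}. Your explicit verification of the cohomological identity via one-dimensionality of $H^{1,1}(X,\mathbb R)$ is a detail the paper leaves implicit, but otherwise the arguments coincide.
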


\begin{proof}
    Since $c_1(X) < 0$ and $n = 1$, it follows that $\hat \omega \in -r c_1(X)$ where $r = -\int_X [\hat \omega] / \int_X c_1(X)$. \cref{conformalestimates} gives $C^{-1} \hat \omega \leq \omega \leq C \hat \omega$ as well as $\lambda < 0$, so the result follows from \cref{convergencewhenc1negative}.
\end{proof}

\noindent In the final theorem, we show that the assumption $\hat \tau \maxx^+ - \hat \tau \minn^- < - \hat R \maxx$ required for \cref{riemannsurfacesconvergence} is not restrictive. We will need the techinical assumption that 
\begin{align}
    \hat R \maxx < \underline \tau < -\hat R \maxx ,\tag{A1}\label{A1}
\end{align}
where $\underline \tau = \frac{\int_X \alpha}{\int_X \omega}$ and $\hat R \maxx$ is the maximum scalar curvature of the initial metric $\hat\omega.$
This assumption is needed for technical reasons in the proof of estimates for the $\omega$ and $\alpha$ in \ref{conformalestimates2}. 

\begin{proof}[Proof of \cref{riemannsurfacesconvergencefromclass}]
    Pick any $\hat \alpha \in A$. Because $\hat R \maxx < \underline \tau < -\hat R \maxx$, we can compress $\hat \tau$ towards its average value until we obtain $\frac{1}{2} \hat R \maxx < \hat \tau < -\frac{1}{2} \hat R \maxx$ everywhere, and doing this preserves the condition $\hat \alpha \in A$. These inequalities imply $\hat \tau \maxx^+ - \hat \tau \minn^- < -\hat R \maxx$, so the result follows from \cref{riemannsurfacesconvergence}.
\end{proof}

\bibliography{references}

\end{document}